\documentclass{amsart}

\usepackage{hyperref}
\usepackage{tikz-cd}
\usepackage{todonotes}

\hypersetup{urlcolor=blue, citecolor=blue, linkcolor=blue, colorlinks=true}

\usepackage{enumitem}
\usepackage{array}
\usepackage{amssymb}
\usepackage{mathtools}
\usepackage{faktor}
\usepackage{xcolor}
\usepackage{tikz}
\usetikzlibrary{arrows}
\usetikzlibrary{decorations.pathmorphing}
\tikzset{snake it/.style={decorate, decoration=snake}}
\usepackage{tikz-cd}
\usepackage{amsthm}
\usepackage{thmtools}
\usepackage{tcolorbox}
\tcbuselibrary{listings,breakable}
\usepackage[utf8]{inputenc}
\usepackage{csquotes}
\usepackage[english]{babel}

\usepackage[scr=rsfs]{mathalpha}
\usepackage[capitalise,nameinlink]{cleveref}

\newcolumntype{L}{>{\scriptstyle}l}
\newcolumntype{C}{>{\scriptstyle}c}
\newcolumntype{R}{>{\scriptstyle}r}

  \definecolor{grau1}{RGB}{100, 100, 100}
  \definecolor{grau2}{RGB}{150, 150, 150}
  \definecolor{grau3}{RGB}{200, 200, 200}

\DeclareMathAlphabet{\mathpzc}{OT1}{pzc}{m}{it}

\usepackage{newunicodechar}

\newtheorem{bigthm}{Theorem}

\newtheorem{thm}{Theorem}[section]
\newtheorem*{thm*}{Theorem A (ii)'}
\newtheorem{lemma}[thm]{Lemma}

\theoremstyle{definition}

\theoremstyle{remark}

\newtheorem{remark}[thm]{Remark}

\newcommand{\nocontentsline}[3]{}
\newcommand{\tocless}[2]{\bgroup\let\addcontentsline=\nocontentsline#1{#2}\egroup}

\setenumerate[1]{leftmargin=*,labelindent=0pt,label=(\roman*),}

\newcommand{\pref}[2]{\hyperref[#1]{#2 \ref*{#1}}}
\newcommand{\id}{\ensuremath{\operatorname{id}}}

\newcommand{\Dirac}{\mathcal D}

\newcommand{\scal}{\ensuremath{\operatorname{scal}}}
\newcommand{\Ric}{\ensuremath{\operatorname{Ric}}}
\newcommand{\tr}{\ensuremath{\operatorname{tr}}}

\newcommand{\ind}{\ensuremath{\operatorname{Ind}}}

\newcommand{\pr}{\mathrm{pr}}

\DeclarePairedDelimiter{\scpr}{\langle}{\rangle}

\newcommand{\eps}{\varepsilon}
\newcommand{\norm}[1]{\left\|#1\right\|}
\newcommand{\abs}[1]{\left|#1\right|}
\newcommand{\bigabs}[1]{\Bigl|#1\Bigr|}

\newcommand{\calR}{\mathcal{R}}
\newcommand{\calD}{\mathcal{D}}

\newcommand{\bbZ}{\mathbb{Z}}
\newcommand{\bbQ}{\mathbb{Q}}
\newcommand{\bbR}{\mathbb{R}}

\newcommand{\scrM}{\mathscr{M}}
\newcommand{\scrE}{\mathscr{E}}
\newcommand{\scrW}{\mathscr{W}}

\newcommand{\bt}{{\mathbf{t}}}
\newcommand{\bn}{\mathbf{n}}
\newcommand{\bOne}{\mathbf{1}}
\newcommand{\hatotimes}{\widehat{\otimes}}
\newcommand{\scrS}{\mathscr{S}}

\newcommand{\ahat}{\hat{\mathcal{A}}}

\newcommand{\torus}[1]{\mathbb{T}^{#1}}

\newcommand{\op}{{\mathrm{op}}}
\newcommand{\ch}{{\mathrm{ch}}}

\newcommand{\spinor}{\Sigma}

\usepackage{amsrefs}

\title[Scalar Curvature Rigidity for Products]{Scalar Curvature Rigidity for Products of convex Hypersurfaces and even dimensional Manifolds}

\date{\today}

\begin{document}

\author[Georg Frenck]{Georg Frenck}
\address[Georg Frenck]{Universität Augsburg, Universitätsstr.~14, 86159 Augsburg, Germany}
\email{\href{mailto:georg.frenck@math.uni-augsburg.de}{georg.frenck@math.uni-augsburg.de}}
\email{\href{mailto:math@frenck.net}{math@frenck.net}}
\urladdr{\href{http://frenck.net/Math}{Frenck.net/Math}}

\author[Thomas Schick]{Thomas Schick}
\address[Thomas Schick]{Mathematisches Institut, Universität Göttingen,
  Bunsenstr.~3, 37073 Göttingen, Germany}
\email{\href{mailto:thomas.schick@math.uni-goettingen.de}{thomas.schick@math.uni-goettingen.de}}
\urladdr{\href{https://topologie.math.uni-goettingen.de/tschick/index.html}{topologie.math.uni-goettingen.de/tschick/}}

\author[Lukas Schönlinner]{Lukas Schönlinner}
\address[Lukas Schönlinner]{Universität Augsburg, Universitätsstr.~14, 86159 Augsburg, Germany}
\email{\href{mailto:lukas.schoenlinner@uni-a.de}{lukas.schoenlinner@uni-a.de}}
\urladdr{\href{https://www.uni-augsburg.de/de/fakultaet/mntf/math/prof/diff/team/lukas-schonlinner/}{uni-augsburg.de/de/fakultaet/mntf/math/prof/diff/team/lukas-schonlinner/}}

\begin{abstract}
  We give a proof of scalar curvature rigidity in the spirit of Llarull and Goette-Semmelmann for products of strictly convex hypersurfaces in Euclidean space and nonnegatively curved spaces with non-vanishing Euler-characteristic.
  Our proof is based on the Fredholm family index theorem. This recovers corresponding results of Lockman-Zeidler where Clifford-linear (family) index theory is used.
\end{abstract}

\maketitle

\section{Introduction}

The classical scalar curvature rigidity theorem of Llarull \cite{Llarull} states that the round metric on an even-dimensional sphere is scalar curvature extremal and rigid. 
Goette and Semmelmann \cite{GS02} generalized this as follows: if $(M,g)$ is a closed smooth spin manifold  with positive curvature operator and $f\colon (W,g_W)\to (M,g_M)$ is a smooth area non-increasing maps satisfying $\scal_W\ge\scal_M\circ f$, then equality holds, provided the Euler characteristic of $M$ is non-zero.
If in addition $\scal_M > 2\Ric_M >0$ then $f$ is a Riemannian covering.

\medskip

Their proof uses Fredholm index theory for twisted Dirac operators, and this is where the condition on the Euler characteristic of $M$ crucially enters, as it corresponds to the K-theory cycle given by the spinor bundle.
Using a suspension trick, one obtains a proof of the corresponding result for odd-dimensional round spheres (compare \cite{Llarull} together with \cite{BaerBrendleHankeWang} for a complete argument).
To obtain a more conceptual proof in this case, while at the same time covering more general situations, one can work with an odd-degree K-theory class to obtain a non-trivial index.
This was achieved by Li--Su--Wang \cite{LiSuWang} and Bär--Ziemke \cite{BaerZiemke} using the spectral flow for strictly convex hypersurfaces in $\bbR^{n+1}$.

\medskip

Recently, Lockman--Zeidler presented another proof via Clifford-linear and family index arguments in \cite{LockmanZeidler}.
A key advantage of their approach is that it provides a uniform proof of both the even- and odd-dimensional cases of Llarull's Theorem.
Their main result is a scalar curvature rigidity result in which the target $M$
is the Riemannian product of strictly convex hypersurfaces of positive
curvature and a flat torus.

\medskip

In this article, we prove a similar scalar curvature rigidity result, the precise formulation is given in \cref{thm:main} below.
This result was obtained independently of \cite{LockmanZeidler}.
The main differences between our approach and theirs are the following:
\medskip
\begin{enumerate}[itemsep=4pt]
  \item We work with the classical Fredholm index (and Fredholm family index), as
  discussed, for example, in the textbook \cite{LawsonMichelsohn}, possibly upon taking the product with the round $S^3$. 
  This should allow a rather direct extension to
  settings with lower regularity of the comparison metric and the comparison map
  due to the existing Fredholm index theory for low regularity Dirac operators.
  \item While the construction of the family of operators in \cite{LockmanZeidler} has an algebraic flavour, our approach is more geometric.
  Our family of operators is obtained by pulling back from the higher-dimensional round sphere via an explicit family of smooth maps $S^n\to S^{n+1}$.
  \item Our target manifold is allowed to be the Riemannian product of any $(N,g)$
  with non-negative curvature operator and non-zero Euler characteristic with finitely many convex closed hypersurfaces of Euclidean space. 
  This yields a full extension of the rigidity result of Goette-Semmelmann.
\item \cite{LockmanZeidler}*{Theorem A} allows for products with tori as well, which requires additional arguments
  and case distinctions, both in the statement and in the proof. To keep our exposition short, we decided not to include this part.
\end{enumerate}

\subsection*{Acknowledgments}
We would like to thank Thomas Tony for fruitful discussions. This work was partially supported by the DFG-SPP 2026 \enquote{Geometry at infinity}.

\section{Scalar curvature rigidity for products}


\begin{bigthm}\label{thm:main}
  For $k\ge1$ let $n_1,\dots,n_k\ge3$ be odd natural numbers and let $S_i\subset\bbR^{n_i+1}$ be closed, convex hypersurfaces with induced metric $g_i$.
  Furthermore, let $(N,g_N)$ be a Riemannian spin manifold with nonvanishing Euler-characteristic and nonnegative curvature operator.
  Then for any smooth map 
  \begin{equation*}
    f\colon (W,g)\to (M, g_M)\coloneqq (N\times S_1\times\dots\times S_k,\  g_N\oplus g_1\oplus \dots\oplus g_k)
  \end{equation*}
  from a closed connected Riemannian spin manifold $(W,g)$ such that $\scal_{g}\ge\scal_{g_M}\circ f$ and $\deg(f)\neq 0$, and for which the respective compositions $\pr_N\circ f$ and $\pr_{S_i}\circ f$ are area non-increasing, we have:
  \begin{enumerate}
    \item $\scal_g = \scal_{g_M}\circ f$.
    \item if $\scal > 2\Ric >0$ holds for $N$ and all $S_i$, then $f$ is a Riemannian covering. 
  \end{enumerate}
\end{bigthm}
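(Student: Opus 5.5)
We use a Goette--Semmelmann/Llarull argument: a twisted Dirac operator on $W$ that must have nontrivial (family) index, paired with a Bochner--Lichnerowicz estimate that gives a contradiction unless equality holds. The novelty is the odd-dimensional factors $S_i$. For these we build a family of bundles over $S_i$, parametrized by a circle, by pulling back the spinor bundle of the round $S^{n_i+1}$ along an explicit family of maps $S^{n_i}\to S^{n_i+1}$. Concretely, let $\nu_i\colon S_i\to S^{n_i}$ be the Gauss map; it is area non-increasing up to the principal curvatures, since $d\nu_i$ is the shape operator. Compose it with the maps $\phi_t\colon S^{n_i}\to S^{n_i+1}$, $x\mapsto(\cos t\, x,\sin t)$ for $t\in[-\pi/2,\pi/2]$, which sweep the equator across the sphere from the south pole to the north pole. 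Let $E_i=\spinor(S^{n_i+1})$ be the even-dimensional spinor bundle. Then $(\phi_t\circ\nu_i)^*E_i$ is a family of bundles with connection parametrized by $t$. At the endpoints the maps are constant, so the bundles are trivialized; gluing the ends gives a family over $S^1$. Its class in $K^{-1}(S_i)$, equivalently its odd Chern character, is nonzero in top degree, because it is the suspension of the generator of $\tilde K(S^{n_i+1})$ pulled back by a degree-one map.

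On $W$ we form the Dirac operator twisted by
\[
f^*\bigl(\spinor N\boxtimes F_1\boxtimes\dots\boxtimes F_k\bigr),
\]
where $F_i$ denotes the family above. This gives a family of self-adjoint or Fredholm operators parametrized by the torus $T^k$. If $\dim W$ has the wrong parity, we first multiply $W$ and $M$ with the round $S^3$, or use the natural grading. By the Atiyah--Singer family index theorem, the index lies in $K^*(T^k)$ and its top component in $H^k(T^k)$ equals
\[
\deg(f)\,\chi(N)\prod_i\langle \ch(F_i),[S_i\times S^1]\rangle\neq0 .
\]
Here $\deg f\neq0$ and $\chi(N)\neq0$ by hypothesis, and the remaining factors are nonzero by the previous paragraph. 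Consequently, for some parameter $t\in T^k$ the operator $D_t$ has nontrivial kernel. In the odd case this is the spectral-flow type statement: a family of self-adjoint operators with nonzero odd family index cannot be invertible at all parameters.

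Next comes the curvature estimate. For each $t$, the Lichnerowicz formula gives $D_t^2=\nabla^*\nabla+\tfrac{\scal_g}{4}+\calR_t$. The Goette--Semmelmann estimate, using nonnegative curvature operator on $N$ and the area non-increasing hypothesis on $\pr_N\circ f$, yields $\calR^{N}\ge-\tfrac{\scal_N\circ f}{4}$ on the $\spinor N$ factor. For each $S_i$ we need the analogous bound $\calR^{F_i}_t\ge-\tfrac{\scal_{S_i}\circ f}{4}$ uniformly in $t$. Summing these bounds gives $D_t^2\ge\nabla^*\nabla+\tfrac14(\scal_g-\scal_{g_M}\circ f)\ge\nabla^*\nabla$. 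A harmonic section $\psi$ is therefore parallel, and the inequalities become equalities on $\operatorname{supp}\psi$. Since parallel sections have constant norm and $W$ is connected, this holds everywhere, which gives (i). For (ii), the Goette--Semmelmann equality analysis under $\scal>2\Ric>0$ forces $df$ to be an isometry on each factor, so $f$ is a local isometry and hence a Riemannian covering.

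\textbf{Main obstacle.} The uniform estimate $\calR^{F_i}_t\ge-\tfrac{\scal_{S_i}}{4}$ is the delicate step. The curvature of $(\phi_t\circ\nu_i)^*\spinor S^{n_i+1}$ is built from $\cos^2 t$ times the curvature of the Gauss map pullback. Since $\phi_t$ contracts distances by a factor $\cos t\le1$, the composite stays area non-increasing relative to the second fundamental form. The plan is to reduce to the computation of Li--Su--Wang and Bär--Ziemke for convex hypersurfaces: express the twisting curvature via the principal curvatures $\kappa_j$, bound it by $\sum_{j<l}\kappa_j\kappa_l=\tfrac12\scal_{S_i}$ as in Llarull's algebraic lemma, and check that the factor $\cos^2 t\le1$ only improves the bound. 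A secondary point is checking that the gluing at the endpoints $t=\pm\pi/2$ is compatible with the connections, so that the family over $S^1$ is smooth.
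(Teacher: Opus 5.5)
Your proposal follows essentially the same route as the paper: Gauss maps composed with a degree-one sweep $S^{n_i}\times S^1\to S^{n_i+1}$, then the family index over $\torus{k}$ equal to $\deg(f)\cdot 2^k\cdot\chi(N)\neq 0$, then the Llarull/Goette--Semmelmann algebraic estimate with the factor $\rho^2(t_i)$ playing the role of your $\cos^2 t$, and then the same equality analysis; the only construction difference is that the paper uses the cut-off functions $\rho,\sigma$ and closes the loop along a longitude instead of gluing the constant ends, so its family is genuinely smooth. One small slip, which does not affect non-vanishing: the pulled-back graded spinor bundle of $S^{n_i+1}$ is $\chi(S^{n_i+1})=2$ times a generator, not a generator.
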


\begin{remark}
  \begin{enumerate}
    \item The condition that the composition with the respective projections is area non-increasing is weaker than the entire map being area non-increasing as the product of two area non-increasing maps need not be area non-increasing.
    \item For any convex hypersurface $S\subset \bbR^{n+1}$, $n\ge3$, the estimate $\scal > 2\Ric >0$ is satisfied if and only if $S$ is strictly convex.
  \end{enumerate}
\end{remark}

\begin{remark}
  The result leaves a number of obvious open questions, two of which we want
  to stress here:
  \begin{enumerate}
    \item Most obviously, it is remarkable that for even dimensional sphere
    factors, one can deal with any Riemannian metric with nonnegative curvature
    operator, whereas for odd dimensional manifolds, at the moment we have to
    assume that the metric is induced from a strictly convex embedding into
    Euclidean space of codimension $1$. We still lack a method to
    represent nontrivial classes in odd K-theory by cycles which are coupled sufficiently to the
    Riemannian metric in the general case.
    \item Due to the fact that the condition on the map can be phrased as a
    purely metric condition, Gromov asked whether one can lower the regularity of
    the comparison map and still get the rigidity result. This was
    answered positively in \cites{CecchiniHankeSchick,
    CecchiniHankeSchickSchoenlinner} (a different approach is given in
    \cite{LeeTam}). It remains an interesting task to
    extend our method to such low regularity settings, as well. 
  \end{enumerate}
\end{remark}


\section{Proof of \texorpdfstring{\cref{thm:main}}{Theorem \ref*{thm:main}}}

\noindent Like Llarull's original proof for rigidity of round spheres, the proof of \cref{thm:main} consists of two parts: an index-theoretic one and a geometric one. 
We begin with the index-theoretic part, which is based on the family index theorem for twisted Dirac operators.

\medskip

We first recall the family index theorem for families of twisted Dirac operators.
Let $B$ be a Hausdorff space and let $\pi\colon\scrM\to B$ be a smooth fibre bundle of closed even-dimensional Spin manifolds with $T\scrM$ the associated vertical tangent bundle equipped with a Riemannian metric and a Spin structure.
Furthermore, let $\scrE\to B$ be a fiber bundle whose fiber over $b\in B$ is a vector bundle $E_b\to M_b \coloneqq \pi^{-1}(b)$ equipped with a smooth family of metric connections $\nabla^E$.
Consider the family of twisted Dirac operators given by
\[
\Dirac^+_{\scrE,b}\coloneqq\Dirac^{+}_{M_b}\otimes\id + \id\otimes\nabla^{E_b}\colon C^\infty(\spinor_+M_b\otimes E_b)\to C^\infty(\spinor_-M_b\otimes E_b).
\]
The K-theoretical family index theorem states the following.
\begin{thm}[\cite{LawsonMichelsohn}*{Corollary III.15.5}]
  $\ind(\Dirac^+_\scrE) = \pi_!(\ch(\scrE)\widehat{A}(T\scrM))\in K(B)$, where $\pi_!\colon K(\scrM)\to K(B)$ is the pushforward in $K$-theory.
\end{thm}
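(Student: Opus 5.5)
Since this is the Atiyah--Singer family index theorem for twisted Dirac operators, the plan is not to reprove the analysis from scratch. Instead I would reduce the statement to the general K-theoretic family index theorem, analytic index $=$ topological index, and then identify the topological side. One point of interpretation comes first. As written, the right-hand side mixes cohomological objects ($\ch$, $\widehat{A}$) with $K(B)$. I read the statement as the K-theoretic identity $\ind(\Dirac^+_\scrE)=\pi_!^K[\scrE]$, where $\pi_!^K$ is the K-theoretic Gysin map defined by the spin structure on $T\scrM$. Its image under the Chern character is the cohomological formula $\ch(\ind\Dirac^+_\scrE)=\pi_*(\ch(\scrE)\widehat{A}(T\scrM))\in H^{\mathrm{ev}}(B;\bbQ)$. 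I would prove the K-theoretic version and deduce the other one at the end.

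\textbf{Step 1 (analytic index).} The operators $\Dirac^+_{\scrE,b}$ form a continuous family of elliptic, hence Fredholm, operators between Sobolev completions. First I would check that the family is norm-continuous as maps $H^1\to L^2$ after choosing local trivializations of $\pi$. This uses that the metric, the spin structure and $\nabla^E$ vary smoothly. The family then has a well-defined index $\ind(\Dirac^+_\scrE)\in K(B)$: stabilize by a finite-rank bundle so that the cokernels vanish, and take the formal difference of the kernel bundle and the added bundle.

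\textbf{Step 2 (topological index).} Let $p\colon T\scrM\to\scrM$ be the projection. The vertical principal symbol of $\Dirac^+_{\scrE}$ is Clifford multiplication $\sigma(\xi)=i\,c(\xi)\otimes\id_E$, which gives a class in $K_c(T\scrM)$. I would then invoke the Atiyah--Singer families theorem: $\ind(\Dirac^+_\scrE)=t\text{-}\ind[\sigma]$. Here $t\text{-}\ind$ is constructed by embedding $\scrM\hookrightarrow B\times\bbR^N$ fibrewise, applying the Thom isomorphism for the (complex) normal bundle of $T\scrM\subset B\times\bbR^{2N}$, and then Bott periodicity over $B$.

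\textbf{Step 3 (spin identification).} Next I would show that $[\sigma]=[p^*\scrE]\cdot\lambda_{\mathrm{spin}}$. Here $\lambda_{\mathrm{spin}}\in K_c(T\scrM)$ is the class of $(p^*\spinor_+,p^*\spinor_-,c(\xi))$, which is the K-theory Thom class of the even-rank spin bundle $T\scrM$. Because of this, $t\text{-}\ind([p^*\scrE]\lambda_{\mathrm{spin}})$ is exactly the spin Gysin map $\pi_!^K[\scrE]$. I expect this step, namely the compatibility of the symbol-based topological index with the spin Thom isomorphism under the embedding, to be the main obstacle. One has to check that composing the Thom classes of the normal bundle and of $T\scrM$ gives the Bott class of $B\times\bbR^{2N}$. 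Multiplicativity of Thom classes for $T\scrM\oplus\nu\cong\underline{\bbR}^N$ together with the $\mathrm{Spin}^c$ structure should handle this.

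\textbf{Step 4 (Chern character).} Finally, apply $\ch$ to $\pi_!^K[\scrE]$ and use the Riemann--Roch formula for the Thom isomorphism, $\ch(\lambda_{\mathrm{spin}})=\Phi_H\bigl(\widehat{A}(T\scrM)^{-1}\bigr)$ up to sign conventions. Here $\Phi_H$ is the cohomological Thom isomorphism. Together with naturality of $\ch$ under fibre integration, this gives $\ch(\ind\Dirac^+_\scrE)=\pi_*(\ch(\scrE)\widehat{A}(T\scrM))$, which is the stated formula.
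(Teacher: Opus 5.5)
The paper gives no proof of this statement. It quotes it from Lawson--Michelsohn, where it is derived much as you plan: from the K-theoretic Atiyah--Singer families index theorem, by identifying the symbol of $\Dirac^+_\scrE$ with $[p^*\scrE]$ times the spin Thom class of $T\scrM$, followed by a Riemann--Roch computation. As printed, the statement feeds a cohomology class into $\pi_!\colon K(\scrM)\to K(B)$. Your reading is the right one, and the cohomological version $\ch(\ind\Dirac^+_\scrE)=\pi_*(\ch(\scrE)\widehat{A}(T\scrM))$ is exactly what the paper uses when it integrates over $\torus{k}$. Your Steps 1 and 2 (finite-rank stabilization, fibrewise embedding into $B\times\bbR^N$) need $B$ compact, not merely Hausdorff. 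This holds for $B=\torus{k}$.

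Step 4, however, gives the inverse class as written. The Chern character does not commute with Gysin maps; the failure is exactly the Riemann--Roch correction. Suppose you account only for the defect of the Thom class of $T\scrM$, namely $\ch(\lambda_{\mathrm{spin}})=\pm\Phi_H(\widehat{A}(T\scrM)^{-1})$, and then ``integrate over the fibres''. The result is $\pi_*(\ch(\scrE)\widehat{A}(T\scrM)^{-1})$.

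The missing piece is that $\mathrm{t\text{-}ind}\colon K_c(T\scrM)\to K(B)$ has its own defect. The complex Thom isomorphism for the normal bundle $p^*(\nu\otimes\mathbb{C})$ of $T\scrM\subset B\times\bbR^{2N}$ contributes $\mathrm{Td}(\nu\otimes\mathbb{C})^{-1}=\widehat{A}(\nu)^{-2}=\widehat{A}(T\scrM)^{2}$, because $T\scrM\oplus\nu$ is trivial. This is the factor $\mathrm{Td}(T\scrM\otimes\mathbb{C})$ in the cohomological families index formula. Only the product $\widehat{A}(T\scrM)^{-1}\cdot\widehat{A}(T\scrM)^{2}$ gives $\widehat{A}(T\scrM)$.

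Alternatively, bypass the Thom class of $T\scrM$ in Step 4 and define $\pi^K_!$ through the spin normal bundle $\nu$ of $\scrM\subset B\times\bbR^N$. The Chern character does commute with extension by zero and with Bott periodicity on the trivial $\bbR^N$-factor. This gives $\ch(\pi^K_!x)=\pm\pi_*(\ch(x)\widehat{A}(\nu)^{-1})=\pm\pi_*(\ch(x)\widehat{A}(T\scrM))$.
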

To obtain a numerical invariant, we can integrate the Chern character of the index class over $B$ and obtain
\[
\int_B\ch(\ind(\Dirac^+_\scrE)) = \int_B \pi_!\left(\ch(\scrE)\widehat{A}(T\scrM)\right) \in \bbQ.
\]
Let us turn to the proof of \cref{thm:main}.
Without loss of generality, we may assume that $\dim(W)$ is even, 
Otherwise, we can take the product with a round $3$-sphere and consider the map $f\times\id_{S^3}\colon W\times S^3\to M\times S^3$.

\medskip

For  a strictly convex hypersurface  $S_i\subset\bbR^{n_i+1}$ we denote by
$\nu_i\colon S_i\to S^{n_i}$ the Gauss map assigning to $x$ the normal vector of $S_i$ at $x$.
Furthermore, we write $\gamma_i$ for the round metric on $S^{n_i+1}$ and $\gamma\coloneqq\oplus\gamma_i$.
For the multi-index $\bn = (n_1, \ldots , n_k)$ we denote
\begin{equation*}
  \scrS^\bn \coloneqq S^{n_1}\times\dots\times S^{n_k}.
\end{equation*}
Set 
\begin{equation*}
  \nu\coloneqq \nu_1\times \ldots \nu_k \colon S_1 \times \ldots \times S_k \to \scrS^\bn.
\end{equation*}
Fix a $2\pi$-periodic function $\rho\colon\bbR\to [-1,1]$ satisfying the following:
\begin{enumerate}
  \item $\rho(0)=1$ is the only maximum of $\rho$ inside $[-\pi,\pi]$,
  \item $\rho(x+\pi)=-\rho(x)$,
  \item $\rho \equiv 0$ on $[\tfrac{\pi}{2}-\eps,\tfrac{\pi}{2}+\eps]$ for some $\eps>0$
  \item the following, $2\pi$-periodic function is smooth: \[\sigma(t)\coloneqq\begin{cases}
    \sqrt{1-\rho(t)^2} &\text{ for }t\in[0,\pi] + 2\pi\bbZ\\
    -\sqrt{1-\rho(t)^2} &\text{ for }t\in[\pi,2\pi] + 2\pi\bbZ
  \end{cases}\]
\end{enumerate}
Such a function can, for example, be constructed as the product $\cos\cdot\lambda$, where $\lambda$ is a smooth bump function supported near $\pi\cdot\bbZ$, see \cref{fig:graph-of-rho}.
\begin{figure}
  \begin{tikzpicture}
    \node at(3.3,0) {\includegraphics[width=.9\textwidth, trim=200 300 200 300, clip]{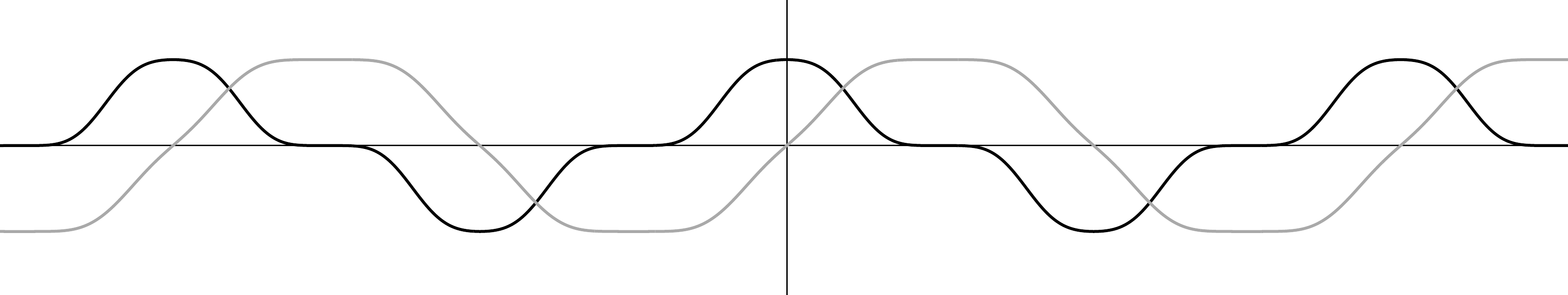}};
    \node at(0,-.7) {$\rho$};
    \node at(5.5,.8) {\color{gray}{$\sigma$}};
  \end{tikzpicture}
  \caption{The functions $\rho$ and $\sigma$.}\label{fig:graph-of-rho}
\end{figure}
Using the identification $\bbR/2\pi\bbZ\cong S^1$ and letting  $p_i\in S^{n_i}$ be the north pole, we define the map
\[\phi_i\colon S^{n_i}\times S^1\to S^{n_i+1},\quad (x,\ t)\mapsto \begin{cases}
    \left(\rho(t)\cdot x,\ \sigma(t)\right) &\text{ for }t\in[-\pi/2,\pi/2] + 2\pi\bbZ\\
    \left(\rho(t)\cdot p_i,\ \sigma(t)\right)&\text{ for }t\in[\pi/2,3\pi/2] + 2\pi\bbZ
  \end{cases}
\]
For \(t\in[-\pi/2,\pi/2]\), the map \(\phi_i\) describes a family of latitudinal embeddings of \(S^{n_i}\), moving from the south pole to the north pole; for \(t\in[\pi/2,3\pi/2]\), it follows a longitudinal line from the north pole back to the south pole
Note that $\deg(\phi_i)=1$, since $(x,0)\not=(p_i,0)$ in $S^{n_i+1}$ has precisely one preimage.
For $t\in S^1$ we set $\phi_{i,t}\coloneqq \phi_i(\_, t)$ and we observe that $\phi_{i,2\ell\pi}$ is an embedding of the equator for all $\ell\in\bbZ$.
For $\bn + \bOne \coloneqq (n_1 + 1, \ldots, n_k + 1)$ we define a map
\begin{align*}
  \Phi\colon{}& \scrS^\bn\times \torus{k}\to \scrS^{\bn + \bOne}\\
    &(x_1,\dots,x_k,t_1,\dots,t_k)\mapsto (\phi_1(x_1,t_1),\dots,\phi_k(x_k,t_k))\\
  \Phi_M\coloneqq{}& \id_N\times(\Phi\circ(\nu\times\id_{\torus{k}}))\colon M\times\torus{k}\to N\times \scrS^{\bn + \bOne}.
\end{align*}
Since $\deg(\phi_{i})=1$, we have $\deg(\Phi_M) = 1$, too.
For $f$ as in the theorem we define 
\[
F \coloneqq f\times\id_{\torus{k}}\colon W\times\torus{k}\to M\times\torus{k}. 
\]
We note that the spinor bundle of a product of even-dimensional manifolds
$X,Y$ can be identified via the graded exterior tensor product $\Sigma(X\times Y) = \Sigma X\hatotimes\Sigma Y$.
Let
\begin{equation*}
  \left(\spinor \left( N\times \scrS^{\bn +\bOne}\right), \nabla^{\spinor (N\times \scrS^{\bn +\bOne})}\right)
\end{equation*}
be the spinor bundle over $N\times \scrS^{\bn +\bOne}$ with spinor connection associated with the metric $g_N\oplus \gamma$. 
We define the bundle 
\[
(E,\nabla^E)\coloneqq F^*\Phi_M^*\left(\Sigma( N\times \scrS^{\bn +\bOne}),\nabla^{\spinor (N\times \scrS^{\bn +\bOne})}\right)
\]
over $W\times\torus{k}$.
For a fixed tuple ${\bf t}\coloneqq(t_1,\dots,t_k)\in\torus{k}$ and $X\in TW$ we obtain
\begin{align}
  \begin{split}
    \nabla_X^{E,\bt}\coloneqq{}& (\nabla^E|_{W\times\{\bt\}})_X\\
      ={}& \Bigl(f^*(\id_N\times((\phi_{1, t_1}\times\dots\times\phi_{k, t_k})\circ\nu))^*\nabla^{\spinor (N\times \scrS^{\bn +\bOne})}\Bigr)_X\\
      ={}& f^*\Bigl(\nabla^{g_N}\hatotimes\id_{\Sigma\scrS^{\bn +\bOne}} + \id_{\spinor N}\hatotimes\nu^*(\phi_{1,t_1}\times\dots\times\phi_{k,t_k})^*\nabla^{\spinor \scrS^{\bn +\bOne}}\Bigr)_X.
  \end{split}
\end{align}
Let $E^\bt \coloneqq E\vert_{W\times \{\bt\}}$.
Since $n_i$ is odd, we have
\begin{align*}
  \Sigma \scrS^{\bn +\bOne}\cong{}& \Sigma S^{n_1+1}\hatotimes\dots\hatotimes\Sigma S^{n_k+1}\\
  \nabla^{\spinor \scrS^{\bn +\bOne}} ={}&\sum_{i} \id_{\Sigma S^{n_1+1}}\hatotimes\dots\hatotimes\nabla^{\spinor S^{n_i+1}}\hatotimes\dots\hatotimes \id_{\Sigma S^{n_k+1}}.
\end{align*}
For every $\bt\in \torus{k}$, let 
\begin{equation*}
  \Dirac_{E,\bt} \colon C^\infty(\spinor W\otimes E^\bt) \to C^\infty(\spinor W\otimes E^\bt)
\end{equation*}
be the Dirac operator associated with the metric $g$ and twisted with $(E, \nabla^{E,\bt})$.
Since $\dim W = \dim N + \sum_i n_i=\dim M$ is even, we have orthogonal decompositions
\begin{align*}
  \spinor W ={}& \spinor_+ W \oplus \spinor_- W\\
  E^\bt ={}& E^\bt_+ \oplus E^\bt_-
\end{align*}
into the $\pm 1$-eigenbundles of the respective volume element. 
Clifford multiplication on the twist bundle induces a $\bbZ/2\bbZ$-grading
\begin{align*}
  \left(\spinor W \otimes E^\bt\right)_+ ={}& \left(\spinor_+ W \otimes E^\bt_+\right) \oplus \left(\spinor_- W \otimes E^\bt_-\right) \\
  \left(\spinor W \otimes E^\bt\right)_- ={}& \left(\spinor_- W \otimes E^\bt_+\right) \oplus \left(\spinor_+ W \otimes E^\bt_-\right)
\end{align*}
and the Dirac operator satisfies
\begin{equation*}
  \Dirac_{E,\bt}^\pm \colon C^\infty((\spinor W\otimes E^\bt)_\pm) \to C^\infty((\spinor W\otimes E^\bt)_\mp).
\end{equation*}
We define 
\begin{align*}
  \scrW\coloneqq{}& W\times \torus{k}\to \torus{k}\\
  \scrE\coloneqq{}& E\to W\times \torus{k}\to \torus{k}\\
  \spinor \scrW \coloneqq{}& \pr_W^* \spinor W \to \scrW,
\end{align*}
which comes with a $\bbZ/2\bbZ$-grading
\begin{equation*}
  (\Sigma\scrW\otimes \scrE)_\pm = (\pr_W^*\spinor W \otimes E)_\pm.
\end{equation*}
Finally, we obtain a continuous family of Dirac operators
\begin{equation*}
  \Dirac_\scrE^\pm \colon C^\infty((\Sigma\scrW\otimes \scrE)_\pm) \to C^\infty((\Sigma\scrW\otimes \scrE)_\mp). 
\end{equation*}
We deduce from the family index theorem and the fact that the Dirac operator is diagonal with respect to the splitting inside the bundles $(\Sigma\scrW\otimes \scrE)_\pm$ that
\begin{align*}
   \int_{\torus{k}}\ch(\ind(\Dirac^+_\scrE)) ={}& \int_{\torus{k}}\ch(\ind\Dirac_{\scrE}^+\colon C^\infty(\Sigma\scrW_+\otimes \scrE_+) \to C^\infty(\Sigma\scrW_-\otimes \scrE_+)) \\
   &- \int_{\torus{k}}\ch(\ind\Dirac_{\scrE}^+\colon C^\infty(\Sigma\scrW_-\otimes \scrE_-) \to C^\infty(\Sigma\scrW_+\otimes \scrE_-)) \\
   ={}& \int_{\torus{k}}\pi_!(\ch(\scrE_+)\ahat(T\scrW)) - \int_{\torus{k}}\pi_!(\ch(\scrE_-)\ahat(T\scrW))\\
  ={}& \int_{\torus{k}\times W} (\ch(E_+)- \ch(E_-))\ahat(T\scrW)\\
  ={}& \int_{\torus{k}\times W}F^*\Phi_M^*\Bigl(\ch\left(\Sigma_+( N\times \scrS^{\bn +\bOne})\right)\\
    &\qquad\qquad\qquad\qquad - \ch\left(\Sigma_-( N\times \scrS^{\bn +\bOne})\right)\Bigr)\ahat(T\scrW)\\
    ={}& \deg(\Phi_M\circ F)\cdot \chi(N\times S^{n_1+1}\times\dots\times S^{n_k+1})\\
    ={}& \deg(F) \cdot 2^{k}\cdot\chi(N)\not=0,
\end{align*}
where the penultimate equality is the standard result that the graded spinor bundle
on a closed spin manifold represents the fundamental K-theory generator
(Poincar\'e dual to the fundamental class), multiplied with the Euler
characteristic. Its pullback by a map of degree $d$ hence represents the
fundamental K-theory class multiplied with $d$ times the Euler characteristic
of the cycle. Its pairing with the fundamental K-homology class is given by
the integral. For more details, compare \cite{GS02} or
\cite{LawsonMichelsohn}*{Section 6}.
Therefore, there exists a $\bt\in\torus{k}$ such that $\Dirac_{E,\bt}$ has non-trivial kernel.
This concludes the index-theoretic part of the argument.

\medskip

We now turn to the geometric part, beginning with the following lemma.
\begin{lemma}\label{LinAlgEstimate}
  Let $V,W,Z$ be real Euclidean vector spaces with $\dim V = \dim Z$. 
  Moreover, let $A\colon V\to W$ and $B\colon W\to Z$ be linear maps with $\norm{A}_{\rm op}=1$ and let $T\colon V\times Z\to \bbR$ be a bilinear map. 
  By the singular value decomposition, there are orthonormal bases $(w_\beta)$ and $(z_\beta)$ of $W$ and $Z$ and real numbers $b_\beta$ such that $Bw_\beta = b_\beta z_\beta$. 
  If we can arrange that $b_\beta \geq 0$ for all $\beta$, then
  \begin{equation*}
    \abs{\tr(T(-, BA-))} \leq \norm{T}_{\rm op} \sum_\beta b_\beta.
  \end{equation*}

  Moreover, equality can only occur if $b_\beta \abs{A^* w_\beta}_V = b_\beta$ for all $\beta$, where $A^*$ denotes the adjoint of $A$.
\end{lemma}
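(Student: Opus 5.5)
The trace is basis-independent, and $\dim V=\dim Z$ means it can be computed using any orthonormal basis of $V$ paired with any orthonormal basis of $Z$ via an isometry. So the plan is to evaluate $\tr(T(-,BA-))$ in a basis adapted to the singular value decomposition of $B$. Concretely, I read $\tr(T(-,BA-))$ as the trace of the bilinear form $(v,v')\mapsto T(v,BAv')$ on $V$. Given an orthonormal basis $(e_j)$ of $V$, this trace is $\sum_j T(e_j,BAe_j)$.

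I will expand $BAe_j$ through the bases $(w_\beta)$, $(z_\beta)$:
\[
BAe_j=\sum_\beta \langle Ae_j,w_\beta\rangle\, b_\beta z_\beta .
\]
Hence
\[
\tr(T(-,BA-))=\sum_\beta b_\beta \sum_j \langle e_j,A^*w_\beta\rangle\, T(e_j,z_\beta)=\sum_\beta b_\beta\, T(A^*w_\beta,z_\beta),
\]
where the last step uses linearity of $T$ in its first slot: $\sum_j\langle e_j,u\rangle e_j=u$. In particular the result does not depend on the chosen basis $(e_j)$. If $\dim W>\dim Z$, the $w_\beta$ with no partner $z_\beta$ have $b_\beta=0$ and drop out. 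Conversely, extra $z_\beta$ only occur with $b_\beta=0$ too, so I only need to sum over indices where both vectors exist.

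Now I estimate each term by $\abs{T(A^*w_\beta,z_\beta)}\le \norm{T}_{\rm op}\abs{A^*w_\beta}\abs{z_\beta}=\norm{T}_{\rm op}\abs{A^*w_\beta}$. Since $\norm{A^*}_{\rm op}=\norm{A}_{\rm op}=1$ and $\abs{w_\beta}=1$, this gives $\abs{A^*w_\beta}\le1$. Using $b_\beta\ge0$ and the triangle inequality,
\[
\abs{\tr(T(-,BA-))}\le \norm{T}_{\rm op}\sum_\beta b_\beta\abs{A^*w_\beta}\le \norm{T}_{\rm op}\sum_\beta b_\beta .
\]
The nonnegativity assumption on the $b_\beta$ is exactly what lets me multiply the termwise bounds without sign issues.

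For the equality statement, equality in the last inequality forces $\sum_\beta b_\beta(1-\abs{A^*w_\beta})=0$. Each summand is nonnegative, so each vanishes, which is $b_\beta\abs{A^*w_\beta}=b_\beta$. This implicitly assumes $\norm{T}_{\rm op}\neq0$; if $T=0$ the statement degenerates, and I will note that this case is excluded or trivial in the application. The only real subtlety is fixing the convention for the trace of $T(-,BA-)$ when $V\neq Z$ (the identification via $\dim V=\dim Z$). I expect this bookkeeping, rather than any estimate, to be the main point to state carefully.
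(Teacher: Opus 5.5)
Your proof is correct and is essentially the paper's argument: expand $BAv$ in the singular bases to obtain $\sum_\beta b_\beta\, T(A^*w_\beta,z_\beta)$, then bound each term using $b_\beta\ge 0$ and $\norm{A^*}_{\rm op}\le 1$. Your explicit derivation of the equality case, including the caveat that it needs $T\neq 0$ (which the paper leaves implicit), is a small but correct addition.
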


\begin{proof}
  Let $(v_\alpha)$ be any orthonormal basis of $V$. 
  Then we compute
  \begin{align*}
    \abs{\tr(T(-, BA-))} ={}&\bigabs{\sum_{\alpha} T(v_\alpha, BA v_\alpha)} \\
      ={}& \bigabs{\sum_{\alpha, \gamma} \scpr{BA v_\alpha, z_\gamma}_Z T(v_\alpha, z_\gamma)} \\
    ={}& \bigabs{\sum_{\alpha, \beta, \gamma} \scpr{Av_\alpha, w_\beta}_W \scpr{Bw_\beta, z_\gamma}_Z T(v_\alpha, z_\gamma)}  \\
    ={}& \bigabs{\sum_{\beta} b_\beta T\Bigl(\sum_\alpha \scpr{Av_\alpha, w_\beta}_Wv_\alpha, z_\beta\Bigr)} \\
    \leq& \norm{T}_{\rm op}\sum_{\beta} b_\beta \bigabs{\sum_\alpha {\scpr{Av_\alpha, w_\beta}_W}v_\alpha}_V\\
    ={}& \norm{T}_{\rm op}\sum_{\beta} b_\beta \abs{A^*w_\beta}_V \le \norm{T}_{\rm op}\sum_{\beta} b_\beta,
  \end{align*}
  where the last inequality uses $\|A^*\|_\op = \|A\|_\op\le 1$.
\end{proof}

\noindent Note that the Dirac operators $\Dirac_{E,\bt}$, $\bt\in\torus{k}$ satisfy the integrated Schrödinger--Lichnerowicz formula
\begin{align*}
  \bigl\|\calD_{E,\bt} u\bigr\|^2 ={}& \bigl\|\nabla^{\spinor W\otimes E^\bt} u\bigr\|^2 + \frac14\left(\scal_W u ,u\right) + \bigl(\calR^{E^\bt} u, u\bigr),
\end{align*}
where $\calR^{E^{\bt}}$ is the curvature operator of the bundle $\spinor W\otimes E^{\bt}$.
This curvature operator acts on $u =  \sigma\otimes \eta\in C^\infty(\spinor W\otimes E^{\bt})$ by
\begin{align*}
  \calR^{E^\bt} u ={}& \frac12\sum_{a\neq b} c(e_a)c(e_b)\sigma \otimes R^{E^\bt}_{e_a, e_b}\eta\\
\end{align*}
where $(e_1, \ldots, e_n)$ is an orthonormal frame of $TW$ and $R^{E^\bt}$ is the curvature tensor of $(E^\bt, \nabla^{E^\bt})$.
Note that $E^\bt$ is the pullback of the spinor bundle of the manifold
$P\coloneqq N\times \scrS^{\bn+\bOne}$ along the map $\alpha^\bt\coloneqq
\Phi_M(\_,\bt)\circ f$, hence we have 
\begin{align*}
  R^{E^\bt}_{e_a, e_b} ={}& \sum_{i,j}g_P(R^{(\alpha^\bt)^*\Sigma P}_{e_a, e_b}\eps_i,\eps_j) c^{P}(\eps_i)c^{P}(\eps_j)\\
    ={}&\sum_{i,j}g_P(R^{P}_{\alpha^\bt_* e_a, \alpha^\bt_* e_b}\eps_i,\eps_j) c^{P}(\eps_i)c^{P}(\eps_j),
\end{align*}
for $\eps_i$ an orthonormal basis of $TP$.
We have the following pointwise estimate for the curvature endomorphism $\calR^{E^\bt}$.

\begin{lemma}\label{CurvOpPointwiseEstimate}
  For every $\bt\in \torus{k}$ we have
  \begin{equation}\label{EqCurvOpPointwiseEstimate}
    \scpr{\calR^{E^\bt}u, u} \geq -\tfrac14 \scal_M\circ f \cdot \abs u^2
  \end{equation}
  for all $u\in \spinor W\otimes E^{\bt}$.
  Moreover, for all $p\in W$ such that $u(p)\neq 0$, we have that 
  \begin{enumerate}[label = \textup{(\roman*)}]
    \item the inequality is strict if $t_i$ is not a multiple of $\pi$ for at least one
      $i\in \{1,\dots,k\}$,
    \item equality occurs at $\bt\in(\pi\cdot\bbZ)^k$ if and only if $D_pf$ is an isometry.
  \end{enumerate}
\end{lemma}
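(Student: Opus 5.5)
The plan is to follow the Goette--Semmelmann/Llarull computation and reduce the estimate to \cref{LinAlgEstimate}, applied factor by factor. Write $P = N\times\scrS^{\bn+\bOne}$ and $\alpha^\bt=\Phi_M(\_,\bt)\circ f$. Then $D_p\alpha^\bt$ splits into the component $D(\pr_N\circ f)$ into $TN$ and, for each $i$, the component $D\phi_{i,t_i}\circ D\nu_i\circ D(\pr_{S_i}\circ f)$ into $TS^{n_i+1}$. The curvature term $\calR^{E^\bt}$ is therefore a sum of terms, one for each factor of $P$, and it suffices to show that each term is bounded below by $-\tfrac14$ times the scalar curvature of the corresponding factor of $M$, evaluated at $f(p)$.

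For the $N$-factor, I will use the standard Goette--Semmelmann argument. Nonnegativity of the curvature operator lets us write $R^N=\sum_\mu \omega_\mu\otimes\omega_\mu$ with nonnegative eigenvalues. The area non-increasing property of $\pr_N\circ f$, combined with a singular value decomposition of $\Lambda^2 D(\pr_N\circ f)$, gives
\[
\scpr{\calR^{E,N}u,u}\ge -\tfrac14\scal_N|u|^2 .
\]
Here one uses that Clifford multiplication by a 2-form of norm one, acting from both sides, has operator norm at most one.

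For a sphere factor, I will pull back the round curvature $R^{S^{n_i+1}}=\mathrm{id}_{\Lambda^2}$ along $D\phi_{i,t}\circ D\nu_i$. The Gauss map satisfies $D\nu_i=L_i$, the shape operator, with principal curvatures $\kappa_j\ge0$. On $S_i$ the Gauss equation gives
\[
R^{S_i}=\Lambda^2L_i,\qquad \scal_{S_i}=\sum_{j\neq l}\kappa_j\kappa_l .
\]
The map $\phi_{i,t}$ is, on the latitude part, a scaling by $|\rho(t)|\le1$. On the longitude part its differential in the $x$-direction vanishes, so it contributes nothing. The term to estimate therefore has the form $\tr\bigl(T(-,BA-)\bigr)$ on $\Lambda^2T_pW$, where
\[
A=\Lambda^2D(\pr_{S_i}\circ f),\qquad B=\rho(t)^2\Lambda^2L_i ,
\]
and $T$ is the bilinear pairing $(\omega,\eta)\mapsto\scpr{c(\omega)\otimes c(\eta)u,u}$, whose operator norm is at most $|u|^2$. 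I will diagonalise $\Lambda^2L_i$ in the basis $e_j\wedge e_l$, which gives the nonnegative singular values $b_{jl}=\rho^2\kappa_j\kappa_l$. After normalising so that $\|A\|_{\rm op}\le1$ (this is area non-increasing), \cref{LinAlgEstimate} yields the bound $\tfrac14\rho(t)^2\scal_{S_i}$, with the combinatorial constants matching as in Llarull's computation. Summing over all factors gives \eqref{EqCurvOpPointwiseEstimate}, since $\rho^2\le1$.

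For (i), if some $t_i\notin\pi\bbZ$, then $\rho(t_i)^2<1$ by property (i) of $\rho$ together with $\rho(x+\pi)=-\rho(x)$. Since strict convexity gives $\scal_{S_i}>0$, the $i$-th bound is strictly weaker than $\tfrac14\scal_{S_i}$, so the inequality is strict.

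For (ii), at $\bt\in(\pi\bbZ)^k$ each $\phi_{i,t_i}$ is an isometric embedding of the equator. Equality then forces equality in \cref{LinAlgEstimate}, which gives $|A^*w_\beta|=1$ for every $\beta$ with $b_\beta>0$. Using $\scal>2\Ric>0$, all $b_\beta$ are positive, and this forces $\Lambda^2D(\pr_{S_i}\circ f)$ and $\Lambda^2D(\pr_N\circ f)$ to be isometric on the relevant 2-planes. Since the dimensions are equal and each factor is at least 3-dimensional, $\Lambda^2D$ being isometric implies that $D$ itself is an isometry. The converse direction is a direct computation, since then every inequality used becomes an equality.

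The main obstacle is this equality step. Equality in \cref{LinAlgEstimate} only controls $A^*$ on directions with $b_\beta>0$, and one must also exclude mixing between the factors. The partial maps are only area non-increasing separately, so it is not immediate that $D_pf$ maps the orthogonal complements correctly and is a product isometry. I would first try to show that the equality condition forces each $D(\pr\circ f)$ to be a partial isometry of full rank, and then use equality of dimensions together with a trace count to conclude that these partial isometries assemble orthogonally.
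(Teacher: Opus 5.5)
Your proof of the estimate and of (i) follows the paper. You split $\calR^{E^\bt}$ along the factors of $P$, apply \cref{LinAlgEstimate} with $A=\Lambda^2D(\pr_{S_i}\circ f)$ and $B$ with singular values $\rho(t_i)^2\kappa_j\kappa_l$ (or $0$ on the longitudinal branch), and use $\rho^2<1$ off $\pi\bbZ$.

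The gap is in (ii), where you claim that $\scal>2\Ric>0$ makes all $b_\beta$ positive. For a hypersurface factor this is true, since it is just strict convexity: $b_{jl}=\kappa_j\kappa_l>0$. There your route is actually shorter than the paper's. From $|\Lambda^2\psi_i^*(e_j\wedge e_l)|=1$ on a full orthonormal basis and $\|\Lambda^2\psi_i^*\|\le1$, the map $\Lambda^2\psi_i^*$ is an isometric embedding, hence so is $\psi_i^*$ because $n_i\ge3$.

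For the $N$-factor the claim is false. The $b_\beta$ are the eigenvalues of the curvature operator of $N$, and these can vanish even though $\scal_N>2\Ric_N>0$. For example, $N=S^2\times S^2$ has $\Ric=g$ and $\scal=4$, yet its curvature operator kills all mixed $2$-planes. On that kernel, equality says nothing about $\Lambda^2\psi_N^*$, as your last paragraph concedes, so ``$\Lambda^2D$ isometric'' does not follow.

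What is missing is the way the paper, following Goette--Semmelmann, actually uses $\scal>2\Ric$: first show that no singular value of $\psi^*$ exceeds $1$. Suppose the top one satisfied $a_1>1$. Area non-increase gives $a_j\le 1/a_1<1$ for $j\ge2$. Then any unit $2$-vector on which $\Lambda^2\psi^*$ has norm one must lie in $v_1\wedge T$, where $v_1$ is the top singular direction. Equality forces this on the whole image of the curvature operator, so all curvature sits on planes through $v_1$. That means $\scal=2\Ric(v_1)$, a contradiction. Once $a_1\le1$, the image of the curvature operator lies in $\Lambda^2V_1$, with $V_1$ spanned by the singular directions with $a_j=1$, and $\Ric>0$ forces $V_1=T$. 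The paper does this in the principal basis $e_\ell\wedge e_m$; the formulation above also covers non-decomposable eigenforms.

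Second, the assembly step you leave open cannot be settled by a trace count. If each $\psi_j^*$ is an isometric embedding with image $U_j$, then $(D_pf)^*D_pf=\sum_jP_{U_j}$ has trace $\sum_j\dim U_j=\dim W$ however the $U_j$ are positioned. For instance, two lines in $\bbR^2$ at angle $\theta$ give eigenvalues $1\pm\cos\theta$. Factorwise information on the $\Lambda^2\psi_j$ never sees the blocks $\psi_j\psi_l^*$ for $j\neq l$. The paper passes over this in one sentence (``repeating this argument \dots\ $D_pf$ is an isometry''), so there is nothing to borrow. The missing input most naturally comes from the equality conditions on $u$ itself: $u$ must be an extremal eigenvector of $c(\psi_j^*\omega)\otimes c(\omega)$ for all factors simultaneously.

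Some smaller slips:
\begin{itemize}
\item For $t_i$ an odd multiple of $\pi$, the map $\phi_{i,t_i}$ is constant (longitudinal branch), not the equator. So equality can only occur at $\bt\in(2\pi\bbZ)^k$.
\item The converse in (ii) is not a direct computation. For isometric $D_pf$, equality holds only for $u$ in a particular joint eigenspace.
\item The bound $\|c(\omega)\|\le|\omega|$ holds only for decomposable $2$-forms. Curvature eigenforms of $N$ need not be decomposable (e.g.\ the K\"ahler form of $\mathbb{C}P^3$), so the $N$-estimate should go through the Bianchi-identity argument of Goette--Semmelmann. The paper's claim $\|T_N\|_{\op}=|u|^2$ has the same defect.
\end{itemize}
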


\begin{proof}
    The curvature tensor of $M$ decomposes orthogonally as
  \begin{align*}
    R^{\Phi_M(\_,\bt)^*(N\times\scrS^{\bn + \bOne})} ={}& \Phi_M(\_,\bt)^*(R^{N}\oplus R^{\scrS^{\bn+\bOne}})\\
      ={}& \pr_N^*R^{N} \oplus \bigoplus_{i=1}^k \pr_i^*\nu_i^*\phi_{i,t_i}^*R^{S^{n_i+1}}.
  \end{align*}
  Therefore, if we write $\pr_N\colon M \to N$ and $\pr_i\colon M \to S_i$ for the respective projections and set $f_N \coloneqq \pr_N\circ f$, $f_i \coloneqq \pr_i\circ f$, $V_N = f_N^*\spinor N$ and $V_{i}  = ( \phi_{i,t_i}\circ \nu_i \circ f_i)^*\spinor S^{n_i+1}$, we get
  \begin{align*}
    R^{E^\bt}_{e_a, e_b}
      ={}& \sum_{k,\ell}g_N(R^{V_N}_{e_a,  e_b}\eps_k,\eps_\ell) c^N(\eps_k)c^N(\eps_\ell)\\
      &\qquad+ \sum_{i}\sum_{k,\ell }g_i(R^{V_i}_{e_a, e_b}\eps_k,\eps_\ell) c^M(\eps_k)c^M(\eps_\ell)
  \end{align*}
  Thus, we get a decomposition
  \begin{equation*}
    \calR^{E^\bt} = \calR^{V_N} + \calR^{V_1} + \ldots + \calR^{V_k}.
  \end{equation*}
  For $i \in \{1, \dots, k\}$ we define a bilinear map
  \begin{equation*}
    T_i\colon \Lambda^2T_pW\times \Lambda^2T_{(\phi_{i, t_i}\circ \nu_i \circ f_i)(p)}S^{n_i+1} \to \bbR,\quad (X,Y)\mapsto \scpr{(c(X)\otimes c(Y))u,u}.
  \end{equation*}
  Note that $\norm{T_i}_{\rm op} = \abs{u}^2$.
  Let 
  \begin{align*}
    A_i \coloneqq \Lambda^2 (f_i)_* &\colon \Lambda^2 T_pW \to \Lambda^2 T_{f_i(p)}S_i\\
    B_i \coloneqq \Lambda^2 (\phi_{i,t_i}\circ \nu_i)_* &\colon \Lambda^2 T_{f_i(p)}S_i \to \Lambda^2 T_{\phi_{i,t_i}(\nu_i(f_i(p)))}S^{n_i+1}. 
  \end{align*}
  and let $b^i_\beta$ be the singular values of $\Lambda^2(\nu_i)_*$. Then
  $B_i$ has singular values $\rho^2(t_i)b^i_\beta$ for $t_i\in[0,\pi]$ and $0$ otherwise, where $\rho$ is the fixed map from above, see \cref{fig:graph-of-rho}. By Gauss ``theorema egregium'' applied to the convex hypersurface $S_i$ we have
  \begin{equation}
    \label{eq:scal_calculation_for_hypersurface}
    \sum_\beta b^i_\beta = \scal_{S_i}\circ f_i.
  \end{equation}
  Since the curvature operator associated with $R^{S^{n_i+1}}$ is the identity, we get
  \begin{equation*}
    \scpr{\calR^{V_i}u, u} = \tfrac14\tr_g(T_i(-, A_iB_i -)).
  \end{equation*}
  From \cref{LinAlgEstimate} we deduce that
  \begin{equation*}
    \scpr{\calR^{V_i}u, u} \geq - \tfrac14\rho^2(t_i)\scal_{S_i}\circ f_i\abs{u}^2.
  \end{equation*}
  Turning to the $N$-factor, we consider
  \begin{equation*}
    T_N\colon \Lambda^2T_pW \times \Lambda^2T_{f_N(p)}N \to \bbR,\quad (X,Y)\mapsto \scpr{(c(X)\otimes c(Y))u,u},
  \end{equation*}
  which again has $\|T_N\|_{\op}=\|u\|^2$, and as before we define
  \begin{align*}
    A_N \coloneqq{} \Lambda^2 (f_N)_* &\colon \Lambda^2 T_pW \to \Lambda^2 T_{f_N(p)}N\\
    B_N \coloneqq{} R^N &\colon \Lambda^2 T_{f_N(p)}N \to \Lambda^2 T_{f_N(p)}N.
  \end{align*}
  Then we obtain
  \begin{equation*}
    \scpr{\calR^Nu, u} \geq -  \bigl|\tr(R^{f_N^*\Sigma N})\bigr|\abs{u}^2 \geq - \tfrac14\scal_N\circ f_N \abs{u}^2.
  \end{equation*}
  Putting everything together, we obtain
  \begin{align}\label{eq:final-estimate-curvature-endomorphism}
    \begin{split}
      \scpr{\calR^{E^\bt}u, u} \geq{}& -\tfrac14\Bigl(\scal_N\circ f_N + \sum_{i=1}^k \rho^2(t_i)\scal_{S_i}\circ f_i\Bigr)\abs{u}^2\\
      \geq{}& -\tfrac14\scal_M\circ f \abs{u}^2
    \end{split}
  \end{align}
  and the second inequality is strict if $u(p)\not=0$ and $t_i$ is not a multiple of $\pi$ for some $i\in\{1,\dots,k\}$.

  \medskip

  Assume now that $\bt \in (\pi\cdot\bbZ)^k$ and that we are in the equality case. 
  The following argument is a slight modification of \cite{GS02}*{Section 1.2}.
  Fix $i\in \{1, \ldots , k\}$.
  Let $(e_\ell)$ be a $g_i$-orthonormal basis of $TS_i$ and $(\eps_\ell)$ be an orthonormal basis for $TS^{n_i}$ such that $B_i(e_\ell\wedge e_m) = b_{\ell m} \eps_\ell \wedge \eps_m$, where $b_{\ell m} = g(R^{S_i} (e_\ell , e_m)e_m, e_\ell)$ are the singular values of $B_i$. 
  Let $\psi_i \coloneqq (f_i)_*\colon TW\to TS_i$ and let $\psi_i^*\colon TS_i\to TW$ be its adjoint.  
  Note that $\psi_i^*$ has the same singular values as $\psi_i$.
  From the equality case in \cref{LinAlgEstimate} we deduce that 
  \begin{align*}
    b_{\ell m} ={}& b_{\ell m}\abs{\left(\Lambda^2 \psi_i\right)^*e_\ell\wedge e_m }_g.
  \end{align*}
  Therefore, whenever $b_{\ell m}>0$, we have
  \begin{equation}\label{SingularValueEstimate}
    1 = \abs{\left(\Lambda^2 \psi_i\right)^*e_\ell\wedge e_m }_g = \left(\abs{\psi_i^*e_\ell}_g^2\abs{\psi_i^*e_m}_g^2 - g(\psi_i^*e_\ell, \psi_i^* e_m)^2\right)^{\frac12}.
  \end{equation}
  Let $a_1 \geq \ldots \geq a_{n_i}$ be the singular values of $\psi_i^*$ and let $v\in TS_i$  be such that $\abs v_{g_i}=1$ and $\abs{\psi_i^*v}_g= a_1$.
  Suppose, for a contradiction that $a_1 >1$.
  Since $a_1 a_i \leq 1$ for all $i\geq 2$, we get $a_i \leq \tfrac1{a_1} < 1$.
  Hence, the equality 
  \begin{equation*}
    1 = \abs{\left(\Lambda^2 \psi_i\right)^*e_\ell\wedge e_m }_g
  \end{equation*}
  can only occur if $v$ is not orthogonal to $\mathrm{span}(e_\ell, e_m)$.
  For any $\ell \in \{1, \ldots , n_i\}$ we have   
  \begin{equation}\label{eq:scalar-ricci-comparison}
    \sum_{s,t}b_{st} = \scal_{S_i}   >  \underbrace{2\Ric_{S_i}(e_\ell)}_{>0} = 2\sum_{m}b_{\ell m}.
  \end{equation}
  If we fix an $\ell$, this implies that there are indices $m, s, t\in\{1,\dots,n_i\}\setminus\{\ell\}$, such that $b_{\ell m}>0$ and $b_{st}>0$.
  If $s\not=m\not=t$, then
  \begin{equation*}
    v\not\perp {\rm span}(e_\ell, e_m)\cap {\rm span}(e_s, e_t) = \{0\},
  \end{equation*}
  which is a contradiction.
  If $s = m$ or $t = m$ we deduce from $\scal_{S_i}>2\Ric_{S_i}(e_m)$ and \eqref{eq:scalar-ricci-comparison} that there is another pair $(r,q)$ of indices which are distinct from $m$ such that $b_{rq}>0$.
  Then we get a contradiction from
  \begin{equation*}
    v\not\perp {\rm span}(e_\ell, e_m)\cap {\rm span}(e_s, e_t)\cap{\rm span}(e_r, e_q) = \{0\}.
  \end{equation*}
  Consequently, all singular values of $\psi_i^*$ are at most $1$. 
  Together with \eqref{SingularValueEstimate}, this implies 
  \begin{equation*}
    g(\psi_i^*e_\ell, \psi_i^* e_m) = \delta_{m\ell}.
  \end{equation*}  
  Repeating this argument for the $N$-factor and all $S_i$-factors, we conclude that $D_pf$ is an isometry.
\end{proof}

\noindent Inserting inequality \eqref{EqCurvOpPointwiseEstimate} into the integrated Schrödinger-Lichnerowicz formula, we obtain
\begin{align*}
  \bigl\|\Dirac_{E,\bt} u\bigr\|^2 \geq \tfrac14 \left(\left(\scal_W u,u\right) - \left((\scal_M\circ f) u, u\right)\right) \geq 0.
\end{align*}
Moreover, the first inequality is strict if $t_i$ is not a multiple of $\pi$ for some $i$.
Hence, we deduce that $\Dirac_{E, \bt}$ is invertible for all $\bt \notin (\pi\cdot\bbZ)^k$.
Therefore, if $0\not=u\in \ker\left(\Dirac_{E,\bt}\right)$, then $\bt\in(\pi\cdot\bbZ)^k$ and $u$ is a $\nabla^{\spinor W\otimes E_{\bt}}$-parallel spinor.
This implies that $\abs u$ is constantly nonzero and therefore we conclude from the equality case in \cref{CurvOpPointwiseEstimate} that $D_pf$ is an isometry for every $p\in W$, hence a local isometry.
Since $W$ is closed, this local isometry is a Riemannian covering onto its image and, as $\deg(f)\not=0$, $f$ is surjective. This finishes the proof of \cref{thm:main}.\hfill$\Box$


\end{document}